\documentclass[12pt]{amsproc}

\usepackage{amsmath, amsthm, amssymb, amsfonts, amscd, graphicx, endnotes, tikz, color, hyperref}

\usepackage{mathtools}

\usepackage[margin=01.3in]{geometry}

\usetikzlibrary{arrows}

\def\CC{{\mathbb C}}

\def\QQ{{\mathbb Q}}
\def\PP{{\mathbb P}}
\def\QQ{{\mathbb Q}}
\def\RR{{\mathbb R}}

\def\ZZ{{\mathbb Z}}

\def\hhat{{\hat h}}

\def\0{{\mathbf 0}}
\def\1{{\mathbf 1}}

\def\Abf{{\mathbf A}}

\def\Pbf{{\mathbf P}}

\def\Ocal{{\mathcal O}}

\def\Kbar{{\bar K}}

\def\Qbar{\overline{\QQ}}

\def\Gal{\mathrm{Gal}}

\def\sup{\mathrm{sup}}
\def\max{\mathrm{max}}

\theoremstyle{plain}

\newtheorem{thm}{Theorem}

\newtheorem{lem}[thm]{Lemma}

\theoremstyle{definition}

\newtheorem*{rem}{Remark}
\newtheorem{ex}{Example}

\title[Small totally $p$-adic algebraic numbers]{A dynamical construction of small \\ totally $p$-adic algebraic numbers}

\author{Clayton Petsche}

\address{Clayton Petsche; Department of Mathematics; Oregon State University; Corvallis OR 97331 U.S.A.}

\email{petschec@math.oregonstate.edu}

\author{Emerald Stacy}

\address{Emerald Stacy; Department of Mathematics and Computer Science; Washington College; 300 Washington Avenue; Chestertown MD 21620 U.S.A.}

\email{estacy2@washcoll.edu}

\date{January 5 2019}
\keywords{Weil height, small points, totally $p$-adic algebraic numbers, Arakelov-Zhang pairing}
\subjclass[2010]{11G50, 37P05, 37P30, 37P50}


\begin{document}

\begin{abstract}
We give a dynamical construction of an infinite sequence of distinct totally $p$-adic algebraic numbers whose Weil heights tend to the limit $\frac{\log p}{p-1}$, thus giving a new proof of a result of Bombieri-Zannier.  The proof is essentially equivalent to the explicit calculation of the Arakelov-Zhang pairing of the maps $\sigma(x)=x^2$ and $\phi_p(x)=\frac{1}{p}(x^p-x)$.
\end{abstract}

\maketitle


\section{Introduction}

Let $h:\Qbar\to\RR$ denote the absolute Weil height function, and given a subfield $L$ of $\Qbar$, define
\begin{equation}\label{LimInfDef}
\liminf_{\alpha\in L}h(\alpha)
\end{equation}
to be the unique extended real number $B_0\in[0,+\infty]$ with the property that the set
$$
\{\alpha\in L \mid h(\alpha)\leq B\}
$$
is finite for all $B<B_0$ and infinite for all $B>B_0$.  For example, when $L/\QQ$ is a finite extension we have $\liminf_{\alpha\in L}h(\alpha)=+\infty$ by the Northcott finiteness principle.  The converse is not true, as there do exist algebraic extensions $L/\QQ$ of infinite degree such that the set $\{\alpha\in L\mid h(\alpha)\leq B\}$ is finite for all $B>0$; for example, the compositum of all quadratic extensions of $\QQ$ is one such field (Bombieri-Zannier \cite{MR1898444}).  

Let $p$ be a prime.  Denote by $\QQ_{(p)}$ the subfield of $\Qbar$ consisting of all totally $p$-adic numbers; that is, those algebraic numbers whose minimal polynomials over $\QQ$ split completely over the field $\QQ_p$ of $p$-adic numbers.  Bombieri-Zannier \cite{MR1898444} proved that
\begin{equation}\label{BombZannBound}
\frac{\log p}{2(p+1)}\leq \liminf_{\alpha\in \QQ_{(p)}}h(\alpha) \leq\frac{\log p}{p-1}.
\end{equation}
In particular, $\QQ_{(p)}$ has what Bombieri-Zannier call the Bogomolov property, which means that $h(\alpha)\geq C>0$ for some constant $C$ and all nonzero, non-root-of-unity $\alpha\in \QQ_{(p)}$.  The lower bound in  (\ref{BombZannBound}) was later improved slightly by Fili-Petsche \cite{MR3340356}, who showed that 
$\liminf_{\alpha\in \QQ_{(p)}}h(\alpha) \geq \frac{p\log p}{2(p^2-1)}$, and more significantly by Pottmeyer \cite{MR3869606}, who showed that $\liminf_{\alpha\in \QQ_{(p)}}h(\alpha) \geq \frac{\log(p/2)}{p+1}$ and $\liminf_{\alpha\in \QQ_{(2)}}h(\alpha) \geq \frac{\log2}{4}$.

To prove the upper bound in (\ref{BombZannBound}), Bombieri-Zannier used a fairly intricate construction which begins with the polynomial $(x-1)(x-2)\dots(x-r)$ for arbitrarily large $r\geq1$, then performing a perturbation, which on the one hand is $p$-adically small enough to preserve complete splitting over $\QQ_p$, but which on the other hand obtains irreducibility and satisfies the needed asymptotic height bound.  Fili \cite{MR3232769} has given a new proof of the upper bound in (\ref{BombZannBound}) whose main ingredient is the powerful Fekete-Szego theorem with splitting conditions due to Rumely \cite{MR1904930}.

In this paper we give a new proof of the upper bound in (\ref{BombZannBound}) using totally different methods.  Our approach uses ideas from arithmetic dynamical systems and is inspired by the method used by Smyth \cite{MR607924} in the totally real setting.  We divide our results into two theorem statements.

\begin{thm}\label{MainThm1}
Let $p$ be a prime and define the polynomial $\phi_p(x)\in\QQ[x]$ by $\phi_p(x)=\frac{1}{p}(x^p-x)$.   For each $n\geq1$ let $\alpha_n\in\Qbar$ satisfy $\phi_p^n(\alpha_n)=1$.  Then $\{\alpha_n\}$ is a sequence of distinct totally $p$-adic algebraic numbers such that $h(\alpha_n) \leq \frac{\log (p+1)}{p-1}$ for all $n\geq1$.  In particular,
\begin{equation}\label{MainThm1Bound}
\liminf_{\alpha\in \QQ_{(p)}}h(\alpha) \leq \frac{\log (p+1)}{p-1}.
\end{equation}
\end{thm}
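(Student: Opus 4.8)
The plan is to verify three things about the numbers $\alpha_n$ defined by $\phi_p^n(\alpha_n) = 1$: that they are totally $p$-adic, that they are distinct, and that they satisfy the height bound $h(\alpha_n) \leq \frac{\log(p+1)}{p-1}$. The dynamical engine is the map $\phi_p(x) = \frac{1}{p}(x^p - x)$, which reduces mod $p$ to (a constant times) $x^p - x$, the polynomial whose roots are exactly $\FF_p$. So $\phi_p$ has good reduction away from $p$, and the key local input is that $\phi_p$ acts on $\QQ_p$, or rather on $\ZZ_p$, in a way that makes preimages of $1$ stay $p$-adically controlled.

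First I would handle the totally $p$-adic claim. The slick way is to show that $\phi_p \colon \ZZ_p \to \ZZ_p$ is surjective, in fact that each fiber $\phi_p^{-1}(a)$ for $a \in \ZZ_p$ consists of $p$ points in $\ZZ_p$ (counted with multiplicity, but here actually distinct). One way: write $\phi_p(x) - a = \frac{1}{p}(x^p - x - pa)$; modulo $p$ this is $x^p - x \equiv \prod_{c \in \FF_p}(x - c)$, which has $p$ distinct simple roots in $\FF_p$, so by Hensel's lemma each lifts uniquely to a root in $\ZZ_p$. Hence $\phi_p^{-1}(a) \subset \ZZ_p$ has exactly $p$ elements for every $a \in \ZZ_p$. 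Applying this inductively starting from $a = 1 \in \ZZ_p$, every element of $\phi_p^{-n}(1)$ lies in $\ZZ_p$, and there are exactly $p^n$ of them; moreover the same Hensel argument works over any finite extension of $\QQ_p$, so in fact the polynomial $p^{?}\,(\phi_p^n(x) - 1)$ (suitably cleared of denominators) splits completely over $\QQ_p$. Since $\alpha_n$ is a root of this polynomial and its $\QQ$-conjugates are among the other roots, $\alpha_n$ is totally $p$-adic.

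Next, the height bound, which I expect to be the main obstacle and the real content. The idea is to decompose $h(\alpha_n) = \sum_v \hat\lambda_v$ into local contributions and bound each. Away from $p$ and $\infty$, good reduction of $\phi_p$ forces $|\alpha_n|_v \leq 1$ (this is where one uses that $\phi_p^{-1}(\ZZ_v) \subseteq \ZZ_v$ at each such $v$, arguing as above), so those places contribute nothing. At the archimedean places one needs that the filled Julia set, or at least the orbit structure, keeps $\alpha_n$ bounded: from $\phi_p^n(\alpha_n) = 1$ and the shape of $\phi_p$ one shows $|\alpha_n|_v$ is bounded by a root of $\frac{1}{p}(t^p - t) = 1$, i.e. $t^p - t - p = 0$, whose largest real root is less than $(p+1)^{1/(p-1)}$ (check: $t = (p+1)^{1/(p-1)}$ gives $t^p - t = t(t^{p-1} - 1) = t \cdot p > p$, so the relevant root is smaller). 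This yields $\sum_{v \mid \infty} \hat\lambda_v \leq \frac{\log(p+1)}{p-1}$. Finally at $v = p$ itself: here $\phi_p$ does \emph{not} have good reduction, but the Hensel analysis above already showed $\alpha_n \in \ZZ_p$ for every embedding into $\Qbar_p$, hence $|\alpha_n|_p \leq 1$ and this place contributes nothing either. Summing, $h(\alpha_n) \leq \frac{\log(p+1)}{p-1}$.

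Finally, distinctness: if the sequence $\{\alpha_n\}$ took only finitely many values, then those values would form a set of bounded height that is infinite only if some $\alpha$ is preperiodic for $\phi_p$ with $\phi_p^n(\alpha) = 1$ for infinitely many $n$ — but $1$ would then be periodic, and one checks directly that the forward orbit of $1$ under $\phi_p$ is not eventually periodic (e.g.\ $\phi_p(1) = 0$, $\phi_p(0) = 0$, so $1 \mapsto 0 \mapsto 0$; thus $1 \notin \phi_p^{-n}(1)$ for $n \geq 1$, and more generally one shows the $\alpha_n$ realize infinitely many distinct heights, or simply that $\deg \alpha_n \to \infty$ along a subsequence using irreducibility considerations). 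It is cleanest to argue that the set $\bigcup_n \phi_p^{-n}(1)$ is infinite — immediate since $1$ is not periodic and each fiber is nonempty of size $p^n$ — and that the $\alpha_n$ can be chosen to exhaust infinitely many of these points; combined with the uniform height bound this gives \eqref{MainThm1Bound}. The hard part throughout is the archimedean estimate: pinning down the precise constant $(p+1)^{1/(p-1)}$ rather than a cruder bound requires carefully tracking the dynamics of $\phi_p$ on $\RR$ and $\CC$, and this is presumably where the connection to the Arakelov--Zhang pairing of $\sigma(x) = x^2$ and $\phi_p$ advertised in the abstract enters.
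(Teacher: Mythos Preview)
Your overall strategy matches the paper's: Hensel's lemma for the totally $p$-adic claim, integrality at finite places $v\neq p$ from the shape of $\phi_p^n(x)-1$, and an archimedean escape estimate. Two steps need correction.

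\emph{Archimedean bound.} Your stated argument---that $|\alpha_n|_\infty$ is bounded by the positive real root $r$ of $\phi_p(t)=1$---does not iterate: that bound is valid only for $n=1$. Pushing preimages back would give $|\alpha_n|\leq r_n$ where $\phi_p(r_k)=r_{k-1}$, and since your own computation $t^p-t=tp$ at $t=R:=(p+1)^{1/(p-1)}$ shows precisely that $R$ is a real fixed point of $\phi_p$, the $r_n$ increase toward $R$ rather than staying at $r$. The paper instead uses $R$ directly as an escape radius: for complex $x$ with $|x|>R$ one has $|\phi_p(x)|\geq\tfrac{1}{p}|x|\bigl(|x|^{p-1}-1\bigr)>|x|$, and a small-$\epsilon$ sharpening yields $|\phi_p^k(x)|\geq(1+\epsilon)^k|x|\to\infty$. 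Since every conjugate $\beta$ of $\alpha_n$ has bounded forward orbit (it eventually lands on $1\mapsto 0\mapsto 0$), this forces $|\beta|\leq R$ for all conjugates simultaneously. The argument is entirely elementary; the Arakelov--Zhang pairing plays no role here and enters only in Theorem~\ref{MainThm2}.

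\emph{Distinctness.} The theorem asserts that \emph{any} choice of $\alpha_n\in\phi_p^{-n}(1)$ gives pairwise distinct numbers, not merely that such a choice is possible or that $\bigcup_n\phi_p^{-n}(1)$ is infinite. Your observation $1\mapsto 0\mapsto 0$ is exactly what is needed, but the logic should run: if $\alpha_m=\alpha_n$ with $m<n$, then $\phi_p^{n-m}(1)=\phi_p^{n-m}\bigl(\phi_p^m(\alpha_m)\bigr)=\phi_p^n(\alpha_n)=1$, so $1$ is $\phi_p$-periodic, contradicting the fact that $1$ is strictly preperiodic.
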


\begin{thm}\label{MainThm2}
The sequence $\{\alpha_n\}$ of totally $p$-adic algebraic numbers constructed in Theorem~\ref{MainThm1} satisfies $h(\alpha_n)\to\frac{\log p}{p-1}$.  In particular,
\begin{equation}\label{MainThm2Bound}
\liminf_{\alpha\in \QQ_{(p)}}h(\alpha) \leq \frac{\log p}{p-1}.
\end{equation}
\end{thm}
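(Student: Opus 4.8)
The plan is to realize the height computation as an instance of equidistribution / the Arakelov--Zhang pairing, as the abstract suggests. First I would recall the local decomposition of the Weil height: for each $\alpha_n$, writing $h(\alpha_n)=\sum_v h_v(\alpha_n)$ as a sum of local contributions over the places $v$ of $\QQ$ (after averaging over the Galois conjugates, i.e.\ working with the finite Galois-stable set $Z_n$ of roots of $\phi_p^n(x)=1$), I would identify $h$ with the canonical height $\hhat_\sigma$ attached to $\sigma(x)=x^2$, since $\hhat_\sigma=h$ on $\Qbar$. The points $\alpha_n$ are, by construction, preperiodic-adjacent points for $\phi_p$: they lie in $\phi_p^{-n}(1)$, so the probability measures $\mu_n$ uniformly supported on the Galois orbits of $\alpha_n$ should equidistribute, at each place $v$, toward the canonical measure $\mu_{\phi_p,v}$ associated to $\phi_p$ — this is the Baker--Rumely / Chambert-Loir / Favre--Rivera-Letelier equidistribution theorem, applicable because the $\alpha_n$ have bounded height (Theorem~\ref{MainThm1}) and the $\phi_p$-canonical height of $1$ is controlled (one checks $1$ has small $\hhat_{\phi_p}$, or at least that $\hhat_{\phi_p}(\alpha_n)\to 0$, since iterated preimages of a point of height zero have height tending to zero).

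Granting equidistribution, the limit of $h(\alpha_n)$ is computed by integrating the appropriate local potentials: concretely, $\lim_n h(\alpha_n) = \langle \sigma, \phi_p\rangle$, the Arakelov--Zhang pairing, which unwinds to $\sum_v \int g_{\sigma,v}\, d\mu_{\phi_p,v}$ where $g_{\sigma,v}$ is the local Arakelov Green's function / potential for $\sigma$ (i.e.\ for the standard height). So the remaining work is two explicit local computations. At the archimedean place, $\mu_{\sigma,\infty}$ is the uniform measure on the unit circle, so the relevant potential is $\log^+|x|$; I would need the equilibrium measure $\mu_{\phi_p,\infty}$ of the filled Julia set of $\phi_p$ and show $\int \log^+|x|\,d\mu_{\phi_p,\infty}=0$ — heuristically because the filled Julia set of $\phi_p(x)=\frac1p(x^p-x)$ is contained in the closed unit disk (its real slice is $[-1,1]$-like by a Smyth-style argument, and the full Julia set should sit in $|x|\le 1$), making this integral vanish. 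At the place $p$, by contrast, the potential theory sees the factor $\frac1p$: one computes $\int g_{\sigma,p}\, d\mu_{\phi_p,p}$ and this is exactly where the main term $\frac{\log p}{p-1}$ comes from. The cleanest route is to note that $\phi_p$ has good reduction issues precisely at $p$; on the Berkovich line over $\CC_p$ the canonical measure $\mu_{\phi_p,p}$ is a point mass (or a simple combination of point masses) supported on the Gauss point and its forward orbit structure, and evaluating the $\sigma$-potential there yields $\frac{\log p}{p-1}$. At all finite places $v\ne p$, $\phi_p$ has good reduction (the reduction is $x\mapsto$ a degree-$p$ map with the same Julia measure as the Gauss point), so $\mu_{\phi_p,v}$ is the Dirac mass at the Gauss point, the $\sigma$-potential there is $0$, and these places contribute nothing.

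Alternatively — and this is probably the more self-contained write-up, avoiding the full machinery of the pairing — I would work directly with the resultant/discriminant formula for heights. Writing $\alpha_n$ with minimal polynomial $f_n$, one has $h(\alpha_n) = \frac{1}{\deg f_n}\bigl(\log|\mathrm{lead}(f_n)| + \sum \log^+|\text{roots}|\bigr)$ suitably interpreted via Mahler measure, and the roots of $\phi_p^n(x)=1$ can be tracked: the key identity $\phi_p(x)=\frac1p(x^p-x)$ gives, by an inductive Newton-polygon / Weierstrass-preparation analysis at $p$, a precise description of $|\alpha_n|_p$ for the conjugates, from which $\sum_v (\text{local height}) \to \frac{\log p}{p-1}$ by a geometric-series computation. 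The main obstacle in either approach is the same: proving that the archimedean (and non-$p$ finite) contributions genuinely vanish in the limit, i.e.\ that no height "leaks" to the infinite place — equivalently, that the orbit $\{\phi_p^{-n}(1)\}$ stays bounded in $\CC$ (contained in, or converging into, the unit disk) and $p$-adically has the expected valuations. Establishing this boundedness at $\infty$ — that $1$ lies in (or on the boundary of) the filled Julia set of $\phi_p$ and that the filled Julia set is the closed unit disk's real-relevant portion — is the crux; once that is in hand, Theorem~\ref{MainThm1}'s bound $\frac{\log(p+1)}{p-1}$ gets sharpened to the asymptotic $\frac{\log p}{p-1}$ essentially by replacing the crude estimate $|\alpha^p-\alpha|\le |\alpha|^p+|\alpha|$ valid on $|\alpha|\le 1$ with the exact equidistribution limit.
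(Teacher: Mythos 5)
Your framework is right — equidistribution plus the Arakelov–Zhang pairing, noting $h=\hhat_\sigma$ and $\hhat_{\phi_p}(\alpha_n)\to 0$ — and this is the same strategy as the paper. However, the local computations at the core of your argument are inverted, and that is where the actual work lives. You claim $\int \log^+|x|\,d\mu_{\phi_p,\infty}=0$ (archimedean) on the grounds that the complex filled Julia set of $\phi_p$ sits inside the closed unit disk, and that the $p$-adic integral $\int g_{\sigma,p}\,d\mu_{\phi_p,p}$ produces the main term $\frac{\log p}{p-1}$. Both claims are false, and they are false in a way that swaps the roles of the two places. For the archimedean claim: the filled Julia set of $\phi_p$ is \emph{not} contained in $\{|x|\le 1\}$; for instance when $p=2$ the point $x=3$ is a fixed point of $\phi_2(x)=\tfrac12(x^2-x)$ and so lies in the filled Julia set, yet $|3|>1$. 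Indeed the whole point of the paper's archimedean computation is that $\int\log^+|x|\,d\mu_{\phi_p,\infty}=\frac{\log p}{p-1}$ is strictly positive. For the $p$-adic claim: $\mu_{\phi_p,p}$ is supported in $\ZZ_p$ (in fact it is normalized Haar measure there, cf.\ Example~10.120 of Baker–Rumely), which sits inside the unit ball, so $\log^+|\cdot|_p$ vanishes identically on its support and the $p$-adic integral is $0$, not $\frac{\log p}{p-1}$. Your description of $\mu_{\phi_p,p}$ as a point mass at the Gauss point is also incorrect — that is the measure attached to a map with good reduction, and $\phi_p$ has \emph{bad} reduction at $p$ because of the $1/p$ — but even if it were the Gauss point, $\log^+$ evaluated there is $0$, so your claimed value would still not come out.

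You may have been half-thinking of the \emph{other} symmetric form of the pairing, $\sum_v\int\hhat_{\phi_p,v}\,d\mu_{\sigma,v}$, in which the roles really do swap: there the $p$-adic term is $\hhat_{\phi_p,p}(\zeta_0)=\frac{\log p}{p-1}$ (since $\mu_{\sigma,p}=\delta_{\zeta_0}$ and $|\phi_p(\zeta_0)|_p=p$) and the archimedean term vanishes because $\hhat_{\phi_p,\infty}$ is zero on the unit circle. But that is not the formula you wrote, and you cannot justify it by the reasoning you gave. The deeper gap is that you never actually compute the nonzero archimedean integral $\int\log^+|x|\,d\mu_{\phi_p,\infty}$. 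This requires a real argument — the Julia set of $\phi_p$ in $\CC$ is not explicit enough to integrate directly — and the paper handles it by a self-adjointness trick for the Berkovich Laplacian: writing $f_\infty=\log^+|\cdot|-\hhat_{\phi_p,\infty}$, one has $\int f_\infty\,d\mu_{\phi_p,\infty}=-\int f_\infty\,d(\Delta\hhat_{\phi_p,\infty})+f_\infty(\infty)=-\int\hhat_{\phi_p,\infty}\,d(\Delta f_\infty)+f_\infty(\infty)$, and $\Delta f_\infty=\mu_{\phi_p,\infty}-\mu_{\sigma,\infty}$, so the two resulting integrals vanish (each $\hhat_{\phi_p,\infty}$ vanishes on the relevant support, using forward-invariance of the unit disk for the $\mu_{\sigma,\infty}$ piece), leaving exactly $f_\infty(\infty)=\frac{\log p}{p-1}$. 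Nothing in your sketch supplies a substitute for this step; the Mahler-measure/Newton-polygon alternative you mention rests on the same misconception (that the height ``leaks'' at $p$) and would need to be rebuilt from scratch around the archimedean place instead.
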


The bound (\ref{MainThm2Bound}) is stronger than (\ref{MainThm1Bound}) and thus supercedes it, but (\ref{MainThm1Bound}) is only very slightly worse, and Theorem~\ref{MainThm1} has the advantage of a fairly simple and self-contained proof, using only Hensel's Lemma and elementary dynamical arguments. 

The proof of Theorem~\ref{MainThm2} is non-elementary and requires the machinery surrounding the Call-Silverman canonical height function, analysis on the Berkovich projective line, and the equidistribution theorem for dynamically small points due to Baker-Rumely \cite{MR2244226}, Chambert-Loir \cite{MR2244803}, and Favre-Rivera-Letelier \cite{MR2221116}.  While it does not improve upon the results of Bombieri-Zannier and Fili, it is intriguing that the three completely different methods lead to precisely the same bound.

An aspect of Theorem~\ref{MainThm2} which may be interesting to arithmetic dynamicists is that its proof gives explicit calculation of a nonzero Arakelov-Zhang pairing.  Given two rational maps $\phi(x),\psi(x)\in K(x)$ of degree at least two defined over a number field, the Arakelov-Zhang pairing $\langle\phi,\psi\rangle$ is a nonnegative real number which measures the arithmetic-dynamical distance between the two maps; this pairing has been used recently by researchers working in the area of unlikely intersections; see e.g. \cite{Fili2017}, \cite{DKY2019A}, \cite{DKY2019B}.  Petsche-Szpiro-Tucker \cite{MR2869188} have shown that if $\{\alpha_n\}$ is a sequence of distinct points in $\PP^1(\Kbar)$ with $\hhat_{\phi}(\alpha_n)\to0$ (where $\hhat_\phi$ is the Call-Silverman canonical height function), then $\hhat_{\psi}(\alpha_n)\to \langle\phi,\psi\rangle$.  Thus Theorem~\ref{MainThm2} is equivalent to the calculation
\begin{equation}\label{AZCalc}
\langle\sigma,\phi_p\rangle=\frac{\log p}{p-1}
\end{equation}
where $\sigma(x)=x^2$ and  $\phi_p(x)=\frac{1}{p}(x^p-x)$.  We are not aware of any other examples in the literature in which a nonzero dynamical Arakelov-Zhang pairing is explicitly calculated in closed form.

In $\S$~\ref{MainThm1Sect} we review the definition the Weil height and we prove Theorem~\ref{MainThm1}.  In $\S$~\ref{ReviewLocalSect} we give a review of polynomial dynamics on the Berkovich projective line, and in  $\S$~\ref{MainThm2Sect} we prove Theorem~\ref{MainThm2}.  The exposition of $\S$~\ref{ReviewLocalSect} and the remark at the end of $\S$~\ref{MainThm2Sect} may be of more general interest as they give a description of the Arakelov-Zhang pairing for polynomials which is somewhat more accessible than more general treatments currently in the literature.

\section{The proof of Theorem~\ref{MainThm1}}\label{MainThm1Sect}

Denote by $M_\QQ=\{\infty,2,3,5,\dots\}$ the set of places of $\QQ$.  For each $v\in M_\QQ$, let $\CC_v$ be the completion of the algebraic closure of $\QQ_v$, and let $|\cdot|_v$ be the absolute value on $\CC_v$ which is normalized so that it coincides with either the standard real or $p$-adic absolute value when restricted to $\QQ$.  

The absolute Weil height $h:\Qbar\to\RR$ may be defined as follows.  Given $\alpha\in\Qbar$, denote by $\Ocal_\alpha$ the $\Gal(\Qbar/\QQ)$-orbit of $\alpha$ in $\Qbar$.  For each place $v\in M_\QQ$, we may view $\Ocal_\alpha$ as a subset of $\CC_v$ via any embedding $\Qbar\hookrightarrow\CC_v$, and 
\begin{equation}\label{WeilHeightDef}
h(\alpha) = \frac{1}{[\QQ(\alpha):\QQ]}\sum_{v\in M_\QQ}\sum_{\beta\in\Ocal_\alpha}\log^+|\beta|_v,
\end{equation}
where $\log^+t=\log\max(1,t)$. 

\begin{lem}\label{phiPto1Lem}
The polynomial $\phi_p(x) = \frac{1}{p}(x^p-x)$ defines a surjective $p$-to-$1$ map $\phi_p : \ZZ_p \to \ZZ_p$.
\end{lem}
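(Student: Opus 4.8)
The plan is to verify three things in turn: that $\phi_p$ maps $\ZZ_p$ into $\ZZ_p$, that it is surjective onto $\ZZ_p$, and that every fiber has exactly $p$ elements. First I would check that $\phi_p(\ZZ_p)\subseteq\ZZ_p$: for $x\in\ZZ_p$, Fermat's little theorem gives $x^p\equiv x\pmod{p\ZZ_p}$, so $x^p-x\in p\ZZ_p$ and hence $\phi_p(x)=\frac1p(x^p-x)\in\ZZ_p$. This is the key elementary input and is immediate.

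For surjectivity and the fiber count, I would fix $a\in\ZZ_p$ and count the solutions in $\ZZ_p$ of $\phi_p(x)=a$, i.e. of $f(x):=x^p-x-pa=0$. Working modulo $p$, this reduces to $x^p-x\equiv 0\pmod p$, which by Fermat holds for every residue class; so there are $p$ solutions mod $p$, one in each class of $\ZZ/p\ZZ$. I would then lift each of these $p$ simple roots by Hensel's Lemma. The derivative is $f'(x)=px^{p-1}-1\equiv -1\pmod p$, a unit, so every root mod $p$ lifts uniquely to a root in $\ZZ_p$. Conversely any root in $\ZZ_p$ reduces to one of these $p$ residues, and since the lift provided by Hensel is the unique root in that residue disc, the fiber over $a$ has exactly $p$ elements. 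In particular it is nonempty, giving surjectivity.

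The main (and essentially only) obstacle is making sure the Hensel argument is applied in the sharp form that yields both existence and uniqueness within each residue class — that is, one must note $|f'(x)|_p=1$ so the "naive" version of Hensel applies and the correspondence between roots mod $p$ and roots in $\ZZ_p$ is a bijection, not merely a surjection. Once this is in hand, the $p$-to-$1$ and surjectivity claims both follow at once, and the lemma is proved.
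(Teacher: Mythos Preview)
Your proof is correct and follows essentially the same route as the paper: Fermat's little theorem gives $\phi_p(\ZZ_p)\subseteq\ZZ_p$, and then for each target value you study $x^p-x-p\cdot(\text{target})$, observe that its reduction mod $p$ has $p$ simple roots with unit derivative, and apply Hensel's Lemma to lift each one uniquely. The only cosmetic difference is that the paper bounds the fiber size by citing the degree of the polynomial, whereas you argue via uniqueness of the Hensel lift in each residue disc; both are fine.
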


\begin{proof}
Given $\alpha \in \ZZ_p$, by Fermat's Little Theorem we have $|\alpha^p-\alpha|_p \leq 1/p$, and therefore $|\phi_p(\alpha)|_p \leq1$, establishing that $\phi_p(\ZZ_p) \subseteq \ZZ_p$.  Given $\beta \in \ZZ_p$, we want to show that there are $p$ distinct points $\alpha\in\ZZ_p$ that satisfy $\phi_p(\alpha) = \beta$.  For this we consider the polynomial 
$$
g(x) = x^p-x-p\beta.
$$
Let $a \in \{0,1,2,\cdots, p-1\}$.  By Fermat's Little Theorem and the strong triangle inequality,
\begin{equation*}
\begin{split}
| g(a) |_p & = |(a^p-a)-p\beta|_p \leq \tfrac{1}{p} \\
|g'(a)|_p & =| pa^{p-1}-1|_p = 1.
\end{split}
\end{equation*} 
For each such $a$, Hensel's Lemma indicates that there is a unique root of $\phi_p(x)=\beta$ in $\ZZ_p$ congruent to $a$ modulo $p$.  As a polynomial of degree $p$, there can be at most $p$ such roots, and thus $\phi_p:\ZZ_p\to\ZZ_p$ is surjecive and $p$-to-$1$.
\end{proof}

\begin{proof}[Proof of Theorem~\ref{MainThm1}]
To see that the $\alpha_n$ are distinct, suppose on the contrary that there exists some $\alpha\in\Qbar$ such that $\phi_p^m(\alpha) = \phi_p^{n}(\alpha) =1$ with $m<n$.
Then 
$$
1 = \phi_p^{n}(\alpha) = \phi_p^{n-m}(\phi_p^m(\alpha))=\phi_p^{n-m}(1),
$$
implying that $1$ is a periodic point with respect to $\phi_p$.  However, $\phi_p(1) = 0$ and $\phi_p(0)=0$, so $1$ is strictly preperiodic, and hence not periodic, a contradiction.

Next we observe that for each $n\geq1$, all of the algebraic conjugates $\beta\in\Ocal_{\alpha_n}$ of $\alpha_n$ are elements of $\phi^{-k}(1)$, because $\alpha_n$ is a root of $\phi^n(x)-1$ and hence its minimal polynomial over $\QQ$ is a divisor of $\phi^n(x)-1$.  The polynomial $\phi^n(x)-1$ has $p^n$ distinct roots in $\ZZ_p$, as follows from Lemma~\ref{phiPto1Lem}, and we can conclude that $\alpha_n$ is totally $p$-adic.

It remains only to give the bound on the height of the $\alpha_n$.  We will show that for all places $v\in M_\QQ$ and all $\beta\in\Ocal_{\alpha_n}\subseteq\CC_v$, we have
\begin{equation}\label{AbsValBounds}
\begin{split}
|\beta|_v & \leq 1 \text{ if } v \neq \infty \\
|\beta|_v & \leq (p+1)^{1/(p-1)} \text{ if } v = \infty.
\end{split}
\end{equation}

Suppose first that $v\not\in\{p,\infty\}$.  The polynomial $f(x) = \phi_p^n(x)-1$ has $v$-integral coefficients, and its leading coefficient, a power of $1/p$, is a $v$-adic unit.  As $\beta$ is a root of $f(x)$, it is $v$-integral; i.e. $|\beta|_v \leq 1$.

In the case $v=p$, by Lemma \ref{phiPto1Lem}, all points in the backwards orbit of $1$ are in $\ZZ_p$, and since $\beta\in\Ocal_{\alpha_n}\subseteq\phi_p^{-n}(1)$, we conclude that $|\beta|_p \leq 1$.

We turn to the case $v=\infty$.  Since $\phi_p^n(\beta)=1$, $\phi_p(1)=0$, and $\phi_p(0)=0$, $\beta$ is $\phi_p$-preperiodic, and so in particular the sequence $|\phi_p^k(\beta)|$ is bounded in $\CC$ as $k\to+\infty$.  Therefore, to prove (\ref{AbsValBounds}) it suffices to show that if $x \in \CC$ satisfies $|x| > (p+1)^{1/(p-1)}$, then $|\phi_p^k(x)| \to + \infty$ as $k \to +\infty$.  For such an $x$  we may select $\epsilon > 0$ so small that $|x| > (p(1+\epsilon)+1)^{1/(p-1)}$.  Then 
\begin{equation*}
\begin{split}
| \phi_p(x)  |_v  &= |\tfrac{1}{p}(x^p-x)  | \\
&= \tfrac{1}{p}|x| |x^{p-1}-1| \\
&\geq \tfrac{1}{p}|x|  ( |x|^{p-1}-1 )  \\
&\geq |x| (1+\epsilon).
\end{split}
\end{equation*}
Iterating this inequality gives $|\phi_p^k(x)| \geq |x| (1+\epsilon)^k$ and therefore  $|\phi_p^k(x)| \to + \infty$ as $k \to +\infty$.

Finally, using (\ref{AbsValBounds}) and the definition (\ref{WeilHeightDef}) of the height we have
$$ 
h(\alpha_n) = \frac{1}{[\QQ(\alpha_n):\QQ]}\sum_{v\in M_\QQ}\sum_{\beta\in\Ocal_{\alpha_n}}\log^+|\beta|_v \leq\frac{\log (p+1)}{p-1}.
$$
\end{proof}

\section{Review of local polynomial dynamics}\label{ReviewLocalSect}

For each place $v\in M_\QQ$, let $\Abf_v^1$ be the Berkovich affine line over $\CC_v$; thus $\Abf_v^1$ is the space (equipped with a natural topology) of multiplicative seminorms on the ring $\CC_v[T]$ extending the absolute value $|\cdot|_v$ on $\CC_v$.  If $v=\infty$, then $\Abf_v^1$ is naturally identitifed with $\CC_v=\CC$, but if $v$ is non-Archimedean, then $\Abf_v^1$ is a path-connected, locally compact space containing $\CC_v$ as a dense subspace.  Let $\Pbf_v^1=\Abf_v^1\cup\{\infty\}$ be the Berkovich projective line, the one-point compactification of $\Abf_v^1$.  If $v$ is non-Archimedean and $x\in\Abf_v^1$, then for any polynomial $f(T)\in\CC_v[T]$ we set
$$
|f(x)|_v=[f(T)]_x.
$$
In other words, we interpret $|f(x)|_v$ to mean the seminorm $[\cdot]_x$ of $\CC_v[T]$ evaluated at the polymonial $f(T)$.  For more background on $\Abf_v^1$ and $\Pbf_v^1$ see \cite{MR2599526} Ch. 2 or \cite{MR1070709}.

The measure-valued Laplacian $\Delta$ on $\Pbf_v^1$, in the sense of Baker-Rumely (\cite{MR2599526} Ch. 5 and \cite{MR2244226}), is a linear operator which assigns to each (suitably regular) function $f:\Pbf_v^1\to\RR\cup\{\pm\infty\}$ a signed, finite measure $\Delta f$ on $\Pbf_v^1$.  When $v=\infty$, thus when $\Pbf_v^1=\CC\cup\{\infty\}$, and for smooth $f:\CC\cup\{\infty\}\to\RR$, we have $\Delta f=-\frac{1}{2\pi}(\partial_{xx}+\partial_{yy})f(z)dx\,dy$.  In the non-Archimedean case, $\Delta$ is defined and its properties developed in \cite{MR2599526} Ch. 5.  We point out that the Laplacian is self-adjoint in the sense that $\int_{\Pbf_v^1} f d(\Delta g)=\int_{\Pbf_v^1} g d(\Delta f)$ whenever $f:\Pbf_v^1\to\RR\cup\{\pm\infty\}$ is $(\Delta g)$-integrable and $g:\Pbf_v^1\to\RR\cup\{\pm\infty\}$ is $(\Delta f)$-integrable; in the Archimedean case this is a standard application of multivariable integration by parts, while in the non-Archimedean case it is proved in \cite{MR2599526} Cor. 5.39.

Let $\phi(x)\in \CC_v[x]$ be a polynomial of degree $d\geq2$.  We now describe three closely related dynamical invariants associated to $\phi$, the first being the {\em filled-Julia set}
\begin{equation}\label{FilledJuliaDef}
F_{\phi,v} = \{x\in\Abf_v^1 \mid |\phi^n(x)|_v \text{ is bounded as } n\to+\infty\},
\end{equation}
and the second being the {\em Call-Silverman canonical local height function}
\begin{equation}\label{LocalHeightDef}
\hhat_{\phi,v}:\Pbf_v^1\to\RR\cup\{+\infty\} \hskip1cm \hhat_{\phi,v}(x) = \lim_{n\to+\infty}\frac{1}{d^n}\log^+|\phi^n(x)|_v
\end{equation}
(interpreting $\hhat_{\phi,v}(\infty) =+\infty$).  A standard telescoping series argument shows the existence of the limit in (\ref{LocalHeightDef}).  Finally, the  {\em canonical measure} $\mu_{\phi,v}$ associated to $\phi$ is a $\phi$-invariant unit Borel measure on $\Pbf_v^1$ which occurs as the limiting distribution in several important dynamical equidistribution results, one of which we will describe in the next section.  There are several equivalent constructions of this measure in the literature; see \cite{MR736568}, \cite{MR741393} in the Archimedean case and \cite{MR2244226}, \cite{MR2244803}, \cite{MR2221116} in the non-Archinedean case.  Here we follow the construction of Baker-Rumely \cite{MR2244226} $\S$~3, who characterize $\mu_{\phi,v}$ as the unit Borel measure on $\Pbf_v^1$ satisfying the identity
\begin{equation}\label{CanMeasDef}
\Delta\hhat_{\phi,v} = \delta_\infty-\mu_{\phi,v}
\end{equation}
where $\delta_\infty$ is the Dirac measure on $\Pbf_v^1$ supported at $\infty$.

The following lemma collects well-known properties of $F_{\phi,v}$, $\hhat_{\phi,v}$, and $\mu_{\phi,v}$ which will be useful in the proof of Theorem~\ref{MainThm2}.  When $v\in M_\QQ$ is non-Archimedean, let $\Ocal_v=\{a\in\CC_v\mid |a|_v\leq1\}$ be the ring of integers of $\CC_v$, and let $\zeta_0$ denote the {\em Gauss point} of $\Abf^1_v$; this is the point whose corresponding seminorm is the supremum norm on the unit disc of $\CC_v$; that is $[f(T)]_{\zeta_{0}}=\sup_{|x|_v\leq1}|f(T)|_v$. 

\begin{lem}\label{LocalHeightLem}
Let $\phi(x)\in \CC_v[x]$ be a polynomial of degree $d\geq2$, and let $x\in\Abf_v^1$. 
\begin{itemize}
\item[{\sc (a)}]  $\hhat_{\phi,v}(x)\geq0$, with $\hhat_{\phi,v}(x)=0$ if and only if $x\in F_{\phi,v}$.  
\item[{\sc (b)}] $\hhat_{\phi,v}(\phi(x))=d\hhat_{\phi,v}(x)$   
\item[{\sc (c)}] The support of the measure $\mu_{\phi,v}$ is contained in $F_{\phi,v}$.  
\item[{\sc (d)}]  If $\phi(x)=ax^d+\dots$, then $\lim_{x\to\infty}(\hhat_{\phi,v}(x)-\log|x|_v)=\frac{\log|a|_v}{d-1}$.
\item[{\sc (e)}]  If $v$ is non-Archimedean, and if $\phi(x)$ has coefficients in $\Ocal_v$ and leading coefficient in $\Ocal_v^\times$, then $F_{\phi,v}=\Ocal_v$, $\hhat_{\phi,v}(x)=\log^+|x|_v$, and $\mu_{\phi,v}=\delta_{\zeta_0}$ is the Dirac measure supported at the Gauss point of $\Abf_v^1$.
\end{itemize}
\end{lem}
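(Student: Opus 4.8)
The plan is to deduce each item from the standard theory of the canonical local height and the canonical measure of a polynomial, the one genuinely computational point being the identification of $\mu_{\phi,v}$ in the good-reduction case (e). Parts (a), (b) and (d) all rest on the telescoping representation
\begin{equation*}
\hhat_{\phi,v}(x) = \log^+|x|_v + \sum_{k\ge0}\frac{1}{d^{k+1}}\Bigl(\log^+|\phi^{k+1}(x)|_v - d\log^+|\phi^k(x)|_v\Bigr),
\end{equation*}
which converges because, writing $\phi(x)=ax^d+\dots$ with $|a|_v\neq0$, the quantity $\log^+|\phi(y)|_v - d\log^+|y|_v$ is bounded on $\CC_v$ (indeed $\phi(y)/y^d\to a$ as $|y|_v\to\infty$), so the general term is $O(d^{-k})$ and the series agrees uniformly with the limit in (\ref{LocalHeightDef}). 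Part (b) is then immediate, since $\hhat_{\phi,v}(\phi(x)) = \lim_n d^{-n}\log^+|\phi^{n+1}(x)|_v = d\,\hhat_{\phi,v}(x)$.

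I would next do (d): once $|x|_v$ is so large that every iterate $\phi^k(x)$ lies in the escape region $\{|y|_v>R\}$ (where $|\phi(y)|_v>|y|_v$; such an $R>1$, depending only on $\phi$, exists because $\phi(y)/y^d\to a\neq0$), all the $\log^+$ above become $\log$ and the $k$-th term equals $\log\bigl|\phi(\phi^k(x))/\phi^k(x)^d\bigr|_v$, which tends to $\log|a|_v$ as $|x|_v\to\infty$ since $|\phi^k(x)|_v\to\infty$; the uniform bound on the terms lets one pass the limit through the sum, and $\sum_{k\ge0}d^{-(k+1)}=(d-1)^{-1}$ gives the stated constant. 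For (a), nonnegativity is clear since every $\log^+$ is $\ge0$; if $x\in F_{\phi,v}$ then $\log^+|\phi^n(x)|_v$ stays bounded and division by $d^n$ kills the limit. Conversely, enlarging $R$ if necessary so that (d) gives $\hhat_{\phi,v}(y)>0$ for all $|y|_v>R$, if $x\notin F_{\phi,v}$ then $|\phi^N(x)|_v>R$ for some $N$ and (b) yields $\hhat_{\phi,v}(x)=d^{-N}\hhat_{\phi,v}(\phi^N(x))>0$.

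For (c) I would use the Laplacian identity (\ref{CanMeasDef}). On the open set $\Abf_v^1\setminus F_{\phi,v}$ the partial sums $d^{-n}\log^+|\phi^n(x)|_v$ coincide, for $n$ large and locally uniformly, with the harmonic functions $d^{-n}\log|\phi^n(x)|_v$ (the iterates are large there and $\phi^n$ has no zeros), and they converge uniformly to $\hhat_{\phi,v}$, so $\hhat_{\phi,v}$ is harmonic on $\Abf_v^1\setminus F_{\phi,v}$ and $\Delta\hhat_{\phi,v}=0$ there; then (\ref{CanMeasDef}) forces $\mu_{\phi,v}$ to vanish on $\Abf_v^1\setminus F_{\phi,v}$. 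At $\infty$, part (d) shows $\hhat_{\phi,v}-\log|x|_v$ is bounded, so $\Delta\hhat_{\phi,v}$ carries exactly the atom $\delta_\infty$ there and $\mu_{\phi,v}$ places no mass at $\infty$; hence $\supp\mu_{\phi,v}\subseteq F_{\phi,v}$.

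Finally, (e) is a direct ultrametric computation. If $\phi$ has coefficients in $\Ocal_v$ and $|a|_v=1$, then $|x|_v\le1$ gives $|\phi(x)|_v\le1$ by the strong triangle inequality, so $\Ocal_v$ is forward invariant and $\Ocal_v\subseteq F_{\phi,v}$; while for $|x|_v>1$ the term $ax^d$ strictly dominates, so $|\phi(x)|_v=|x|_v^d$ and inductively $|\phi^n(x)|_v=|x|_v^{d^n}\to\infty$. Hence $F_{\phi,v}=\Ocal_v$ and $\hhat_{\phi,v}(x)=\log^+|x|_v$ for $x\in\CC_v$, and the same estimate, phrased in terms of seminorms, extends $\hhat_{\phi,v}=\log^+|\cdot|_v$ to all of $\Abf_v^1$ (with $\zeta_0\in F_{\phi,v}$ since $[\phi(T)]_{\zeta_0}\le1$); comparing (\ref{CanMeasDef}) with the known identity $\Delta\log^+|x|_v=\delta_\infty-\delta_{\zeta_0}$ (see \cite{MR2599526}, \cite{MR2244226}) gives $\mu_{\phi,v}=\delta_{\zeta_0}$. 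I expect the only real friction to be in the non-Archimedean analytic steps of (c) and (e) --- harmonicity of $\hhat_{\phi,v}$ off the filled Julia set, and the value of $\Delta\log^+|\cdot|_v$ --- but these are exactly the points where one may invoke the Laplacian theory of Baker--Rumely \cite{MR2599526} rather than redevelop it; everything else is the elementary telescoping-series and ultrametric bookkeeping above.
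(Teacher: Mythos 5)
The paper does not actually prove this lemma: parts (a), (b), (e) are cited to Silverman \S 5.9 and Baker--Rumely Ch.\ 10, part (d) is dismissed as ``a standard telescoping series calculation,'' and part (c) is disposed of in one sentence by invoking that $\mu_{\phi,v}$ is supported on the Julia set, which lies in $F_{\phi,v}$. Your proposal reconstructs the standard arguments correctly, and your treatment of (a), (b), (d), (e) via the telescoping series and the ultrametric good-reduction computation matches what the cited references do.

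Where you genuinely diverge is (c). The paper's route is: $\supp\mu_{\phi,v}\subseteq J_{\phi,v}\subseteq F_{\phi,v}$, appealing to the Julia-set support theorem of Ljubich (Archimedean) and Baker--Rumely (non-Archimedean). You instead argue that $\hhat_{\phi,v}$ is harmonic on $\Abf_v^1\setminus F_{\phi,v}$ (uniform limit of $d^{-n}\log|\phi^n(\cdot)|_v$, which are harmonic there once the iterates have escaped the closed unit region and left all zeros of $\phi^n$ behind), so $\Delta\hhat_{\phi,v}=0$ off $F_{\phi,v}$, and then read off from $\Delta\hhat_{\phi,v}=\delta_\infty-\mu_{\phi,v}$ that $\mu_{\phi,v}$ charges nothing outside $F_{\phi,v}\cup\{\infty\}$, finishing with the boundedness of $\hhat_{\phi,v}-\log|x|_v$ near $\infty$ to exclude mass at $\infty$. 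This is a legitimate alternative: it stays entirely inside the Laplacian formalism that the paper already sets up in (\ref{CanMeasDef}), at the cost of having to justify locally uniform escape on $\Abf_v^1\setminus F_{\phi,v}$ and the harmonicity of potentials in the Berkovich setting. The paper's route is shorter but imports the Julia-set support theorem as a black box. Two small points worth tightening in your write-up: in (e), $\zeta_0\in F_{\phi,v}$ requires $[\phi^n(T)]_{\zeta_0}\le1$ for all $n$, not just $n=1$; this follows because each $\phi^n$ has $\Ocal_v$-coefficients, so say so. And in (a), the positivity of $\hhat_{\phi,v}$ on $\{|y|_v>R\}$ should be justified directly from the telescoping bound $\hhat_{\phi,v}(y)\ge\log|y|_v-\tfrac{M}{d-1}$ (with $M$ the uniform bound on the telescope terms) rather than from the limit in (d).
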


These facts are all well-known: for parts {\sc (a)}, {\sc (b)}, and {\sc (e)} see \cite{MR2316407} $\S$~5.9 and \cite{MR2599526} Ch. 10.  Part {\sc (c)} follows from the fact that $\mu_{\phi,v}$ is supported on the Julia set of $\phi$, which is contained in $F_{\phi,v}$, see \cite{MR2599526} Ch. 10 in the non-Archimedean case and \cite{MR741393} in the Archimedean case.  Part {\sc (d)} follows from a standard telescoping series calculation.

\section{Global dynamics and the proof of Theorem~\ref{MainThm2}}\label{MainThm2Sect}

Suppose now that $\phi(x)\in \QQ[x]$ is a polynomial of degree $d\geq2$.  The Call-Silverman \cite{callsilv:htonvariety} canonical height function $\hhat_\phi:\Qbar\to\RR$ is given by the limit $\hhat_\phi(\alpha)=\lim_{n\to+\infty}h(\phi^n(\alpha))/d^n$, and may be characterized by the two properties: (i) $\hhat_\phi(\alpha)=h(\alpha)+O(1)$ for all $\alpha\in\Qbar$; and (ii) $\hhat_\phi(\phi(\alpha))=d\hhat_\phi(\alpha)$ for all $\alpha\in\Qbar$.  Locally, the canonical height may be expressed via local heights in a formula analogous to (\ref{WeilHeightDef}), as follows.  Given $\alpha\in\Qbar$, denote by $\Ocal_\alpha$ the $\Gal(\Qbar/\QQ)$-orbit of $\alpha$ in $\Qbar$, and for each place $v\in M_\QQ$, view $\Ocal_\alpha$ as a subset of $\CC_v$ via any embedding $\Qbar\hookrightarrow\CC_v$; then 
\begin{equation*}
\hhat_\phi(\alpha) = \frac{1}{[\QQ(\alpha):\QQ]}\sum_{v\in M_\QQ}\sum_{\beta\in\Ocal_\alpha}\hhat_{\phi,v}(\alpha).
\end{equation*}

The main relationship between the canonical height $\hhat_\phi$ and the family of local canonical measures $\mu_{\phi,v}$ is the equidistribution theorem for dynamically small points in $\Qbar$, proved by Baker-Rumely \cite{MR2244226}, Chambert-Loir \cite{MR2244803}, and Favre-Rivera-Letelier \cite{MR2221116}.  A special case of this result states that if $\{\alpha_n\}$ is a sequence in $\Qbar$ with $\hhat_\phi(\alpha_n)\to0$, then for each place $v\in M_\QQ$, the sequence of $\Gal(\Qbar/\QQ)$-orbits $\{\Ocal_{\alpha_n}\}$ is $\mu_{\phi,v}$-equidistributed.  This equidistriubtion theorem is the main ingredient in the proof of Theorem~\ref{MainThm2}.  

\begin{ex}\label{SquaringEx}
Consider the squaring map $\sigma(x)=x^2$.  Then $\hhat_{\sigma,v}(x)=\log^+|x|_v$ for all $v\in M_\QQ$, and thus $\hhat_\sigma(\alpha)=h(\alpha)$.  At the Archimedean place, $\mu_{\sigma,\infty}$ is the measure supported on the unit circle group of $\CC_\infty=\CC$ where it coincides with normalized Haar measure.  If $v$ is non-Archimedean then $\mu_{\phi,v}$ is the Dirac measure $\delta_{\zeta_{0}}$ supported at the Gauss point $\zeta_{0}$ of $\Abf^1_v$.  See \cite{MR2244226} Ex. 3.43 and Ex. 5.4.
\end{ex}

\begin{ex}\label{PhiPExample}
Consider the map $\phi_p(x)=\frac{1}{p}(x^p-x)$.  By Lemma~\ref{LocalHeightLem}, if $v\not\in\{p,\infty\}$, we have $\hhat_{\phi_p,v}(x)=\log^+|x|_v$.  At the place $v=p$, it was pointed out in \cite{MR2599526} Ex. 10.120 that $F_{\phi_p,p}=\ZZ_p$ and that $\mu_{\phi_p,p}$ is normalized Haar measure on $\ZZ_p$.  However, for the purposes of Theorem~\ref{MainThm2} we do not need the full strength of this result.  Instead, we only need to point out that 
$F_{\phi_p,p}\subseteq\{x\in\Abf_p^1\mid |x|_p\leq1\}$.  To see this, if $|x|_p>1$, then $|\phi_p(x)|_p=|\frac{1}{p}(x^p-x)|_p=p|x|_p^p$, and iterating, $|\phi_p^n(x)|_p=p^{1+p+\dots+p^{n-1}}|x|_p^{p^n}$.  In particular, $|\phi_p^n(x)|_p\to+\infty$ and therefore $x\not\in F_{\phi_p,p}$.  We point out that this calculation also shows that when $|x|_v>1$, we have $\hhat_{\phi_p,p}(x)=\frac{\log p}{p-1}+\log|x|_p$.  It would be interesting to give an explicit calculation of $\hhat_{\phi_p,p}(x)$ for $x\in\Abf_v^1$ with $|x|_v\leq 1$.
\end{ex}

\begin{proof}[The proof of Theorem~\ref{MainThm2}]  For each place $v\in M_\QQ$, define $f_v:\Pbf^1_v\to\RR$ by 
\begin{equation*}
\begin{split}
f_v(x) & =\log^+|x|_v-\hhat_{\phi_p,v}(x) \text{ when } x\in\Abf_v^1 \\
f_v(\infty) & =\lim_{x\to\infty}(\log^+|x|_v-\hhat_{\phi_p,v}(x)).
\end{split}
\end{equation*}
Note that $f_v$ is identically zero unless $v\in\{p,\infty\}$ by the remarks in Example~\ref{PhiPExample}.  We also note that $f_\infty(\infty)=\frac{\log p}{p-1}$ by Lemma~\ref{LocalHeightLem}.

Recall from Theorem~\ref{MainThm1} that for each $n\geq1$, $\alpha_n\in\Qbar$ satisfies $\phi_p^n(\alpha_n)=1$.  By a property of the canonical height, this imples that $\hhat_{\phi_p}(\alpha_n)=\frac{1}{p^n}\hhat_{\phi_p}(1)\to0$.  Since $f_v$ vanishes identically for $v\not\in\{p,\infty\}$, we have
\begin{equation*}
\begin{split}
h(\alpha_n)-\hhat_\phi(\alpha_n) & = \frac{1}{[\QQ(\alpha_n):\QQ]}\sum_{v\in M_\QQ}\sum_{\beta\in \Ocal_{\alpha_n}}f_v(\beta) \\
& = \sum_{v\in \{p,\infty\}}\frac{1}{[\QQ(\alpha_n):\QQ]}\sum_{\beta\in \Ocal_{\alpha_n}}f_v(\beta),
\end{split}
\end{equation*}
and since $\hhat_{\phi_p}(\alpha_n)\to0$, by the equidistribution theorem for small points we obtain
\begin{equation}\label{FinalLimForm}
\begin{split}
\lim_{n\to+\infty} h(\alpha_n) = \sum_{v\in \{p,\infty\}}\int f_vd\mu_{\phi_p,v}.
\end{split}
\end{equation}

In fact, we have
\begin{equation}\label{padicIntegral}
\begin{split}
\int f_pd\mu_{\phi_p,p}  & = 0
\end{split}
\end{equation}
because both $\log^+|x|_p$ and $\hhat_{\phi_p,p}(x) $ vanish on $F_{\phi_p,p}$ (which contains the support of $\mu_{\phi_p,p}$) by Lemma~\ref{LocalHeightLem} and the discussion in Example~\ref{PhiPExample}.  To calculate the Archimedean integral, we use the self-adjoint property of the Laplacian to obtain
\begin{equation}\label{ArchIntegral}
\begin{split}
\int f_\infty d\mu_{\phi_p,\infty}  & = -\int f_\infty d(\Delta \hhat_{\phi_p,\infty}) + f_\infty(\infty) \\
	& = -\int \hhat_{\phi_p,\infty} d(\Delta f_\infty) + f_\infty(\infty) \\
	& = \int \hhat_{\phi_p,\infty} d\mu_{\phi_p,\infty} -\int \hhat_{\phi_p,\infty} d\mu_{\sigma,\infty}  + f_\infty(\infty) \\
& = f_\infty(\infty) \\
& = \tfrac{\log p}{p-1}.
\end{split}
\end{equation}
Here we have used that $\int \hhat_{\phi_p,\infty} d\mu_{\phi_p,\infty} =0$, which follows from the fact (Lemma~\ref{LocalHeightLem}) that the local height $\hhat_{\phi_p,\infty}$ vanishes on the filled-Julia set of $\phi_p$ in $\CC$, which contains the support of $\mu_{\phi_p,\infty}$.  We have also used that $\int \hhat_{\phi_p,\infty} d\mu_{\sigma,\infty} =0$, which follows from the fact that $\hhat_{\phi_p,\infty}$ vanishes on the support of $\mu_{\sigma,\infty}$, which is the unit circle of $\CC$, as described in Example~\ref{SquaringEx}.  Indeed, if $|x|\leq1$ then $|\phi_p(x)|=\frac{1}{p}|x^p-x|\leq\frac{2}{p}\leq1$, and iterating shows that $\phi_p^n(x)$ is bounded as $n\to+\infty$.  In particular $\hhat_{\phi_p,\infty}(x)=0$ whenever $|x|=1$, and we conclude $\int \hhat_{\phi_p,\infty} d\mu_{\sigma,\infty}(x)=0$.

Combining (\ref{FinalLimForm}), (\ref{padicIntegral}), and (\ref{ArchIntegral}) concludes the proof that $h(\alpha_n)\to\frac{\log p}{p-1}$.
\end{proof}

\begin{rem}
To keep this paper more self-contained, we have chosen to prove Theorem~\ref{MainThm2} directly, without the use of  Theorem 1 of \cite{MR2869188}.  Alternatively, one can show that for a pair of polynomials $\phi(x),\psi(x)\in K[x]$ defined over a number field $K$, the Arakelov-Zhang pairing defined in \cite{MR2869188} can be expressed by either of the two expressions
\begin{equation}\label{AZPairing}
\langle\phi,\psi\rangle = \sum_{v\in M_K}r_v\int\hhat_{\phi,v}d\mu_{\psi,v} =\sum_{v\in M_K}r_v\int\hhat_{\psi,v}d\mu_{\phi,v},
\end{equation}
where $r_v=[K_v:\QQ_v]/[K:\QQ]$.  By Theorem 1 of \cite{MR2869188} we have $h(\alpha_n)\to\langle\sigma,\phi_p\rangle$, and in a calculation similar to the proof of Theorem~\ref{MainThm2} we have $\langle\sigma,\phi_p\rangle=\frac{\log p}{p-1}$.  The formula (\ref{AZPairing}) for the Arakelov-Zhang pairing in the polynomial case may be of interest as it is somewhat simpler than the more general treatment for rational maps described in \cite{MR2869188}.
\end{rem}



\def\cprime{$'$}

\end{document}